\numberwithin{equation}{section}
\definecolor{darkgreen}{rgb}{0,0.7,0.1}
\theoremstyle{plain}
\newtheorem{theorem}{Theorem}[section]
\newtheorem{lemma}[theorem]{Lemma}
\newtheorem{corollary}[theorem]{Corollary}
  \theoremstyle{remark}
\newtheorem{remark}[theorem]{Remark}
  \theoremstyle{definition}
\def\R{\mathbb{R}}
\begin{document}
\subjclass[2010]{35J62, 35B05, 35A24, 35B09, 34B18.}

\keywords{Lorentz-Minkowski mean curvature operator, Shooting method, 
Existence and multiplicity, Oscillating solutions, Neumann boundary conditions.}

\title[Positive solutions for the Minkowski-curvature equation]{Positive radial solutions for the Minkowski-curvature equation  with Neumann boundary conditions}

\author[A. Boscaggin]{Alberto Boscaggin}
\address{Alberto Boscaggin\newline\indent 
Dipartimento di Matematica ``Giuseppe Peano''
\newline\indent
Universit\`a di Torino
\newline\indent
via Carlo Alberto 10, 10123 Torino, Italia}
\email{alberto.boscaggin@unito.it}

\author[F. Colasuonno]{Francesca Colasuonno}
\address{Francesca Colasuonno\newline\indent
Dipartimento di Matematica ``Giuseppe Peano''
\newline\indent
Universit\`a di Torino
\newline\indent
via Carlo Alberto 10, 10123 Torino, Italia}
\email{francesca.colasuonno@unito.it}

\author[B. Noris]{Benedetta Noris}
\address{Benedetta Noris
\newline \indent Laboratoire Ami\'enois de Math\'ematique Fondamentale et Appliqu\'ee\newline\indent
Universit\'e de Picardie Jules Verne\newline\indent
33 rue Saint- Leu, 80039 AMIENS, France}
\email{benedetta.noris@u-picardie.fr}

\date{\today}

\begin{abstract}
We analyze existence, multiplicity and oscillatory behavior of positive radial solutions to a class of quasilinear equations governed by the Lorentz-Minkowski mean curvature operator. The equation is set in a ball or an annulus of $\mathbb R^N$, is subject to homogeneous Neumann boundary conditions, and involves a nonlinear term on which we do not impose any growth condition at infinity. The main tool that we use is the shooting method for ODEs. 
\end{abstract}

\maketitle
\begin{quotation}
{\sl Dedicated to Professor Patrizia Pucci, on the occasion of her 65th birthday, with great esteem.} 
\end{quotation}

\section{Introduction}
In this paper we deal with the existence and multiplicity of solutions to the nonlinear boundary value problem
\begin{equation}\label{eq:main}
\left\{
\begin{array}{ll}
\vspace{0.1cm}
\displaystyle{-\mathop{\rm div}\left(\frac{\nabla u}{\sqrt{1-|\nabla u|^2}}\right) = f(u)} & \mbox{ in } \Omega \\
\vspace{0.1cm}
u > 0 & \mbox{ in } \Omega \\
\partial_\nu u = 0 & \mbox{ on } \partial\Omega, \\
\end{array}
\right.
\end{equation}
where $\nu$ is the outer unit normal of $\partial \Omega$ and $\Omega\subset\mathbb R^N$ ($N\geq1$) is a radial domain which can be either an annulus 
$$
\Omega=\mathcal A(R_1,R_2):=\{ x \in \mathbb{R}^N \, : \, R_1 < \vert x \vert < R_2 \},\quad 0<R_1<R_2<+\infty,
$$
or a ball
$$
\Omega=\mathcal B(R_2):=\{ x \in \mathbb{R}^N \, : \vert x \vert < R_2 \},\quad 0<R_2<+\infty.
$$ 
Throughout the paper, in order to treat simultaneously the cases of the annulus and of the ball, we use the convention $R_1=0$ when $\Omega=\mathcal B(R_2)$.
We are interested in radial, $C^2(\overline{\Omega})$ solutions of \eqref{eq:main}, thus writing, with the usual abuse of notation, $u(x) = u(r)$ for $r = \vert x \vert$.

The nonlinear differential operator appearing in \eqref{eq:main} is usually meant as a mean curvature operator in the Lorentz-Minkowski space
and it is of interest in Differential Geometry and General Relativity \cite{BaSi-8283,Ecker,Ge-83}; it also appears in the nonlinear theory of Electromagnetism, where it is usually referred to as Born-Infeld operator \cite{BoCoFo-pp,BdAP-16,BoIa-pp}. In recent years, it has become very popular among specialists in Nonlinear Analysis, and various existence/multiplicity results for the associated boundary value problems are available, both in the ODE and in the PDE case, possibly in a non-radial setting (see, among others, \cite{Az-14,Az-16,BeJeMa-09,BeJeTo-13,BeJeTo-13a,BeMa-07,BoFe-pp,BoGa-ccm,CoCoObOm-12,CoelhoCorsatoRivetti,CoObOmRi-13,DW,Ma-13} and the references therein).

In this paper, we focus on the Neumann boundary value problem \eqref{eq:main}. Notice that, differently from the Dirichlet problem, solutions cannot exist if $f$ has constant sign; therefore, we are led to assume that $f$ has a zero $s_0 > 0$ (and, thus, $u \equiv s_0$ is a constant solution to \eqref{eq:main}). 
As a prototype nonlinearity, we can think at the following difference of pure powers
\begin{equation}\label{eq:prototype}
f(s)=s^{q-1}-s^{r-1} \quad \mbox{with } 2\leq r<q<+\infty,
\end{equation}
which indeed satisfies $f(s_0) = 0$ for $s_0 = 1$.

In such a situation, it has been shown in some recent papers \cite{BonheureGrossiNorisTerracini2015, bonheure2016multiple,BNW,ABF2,
ABF,ma2016bonheure} that non-constant positive Neumann solutions to the semilinear equation 
$$
-\Delta u = f(u) 
$$
can be provided and characterized in terms of the intersection with the constant solution $u \equiv s_0$. More precisely, in \cite{ABF2,ABF}
it is proved that, if $f'(s_0)$ is greater than the $k$-th eigenvalue of the radial Neumann problem for $-\Delta u = \lambda u$, then
a solution with $u(R_1) < s_0$, having exactly $k$ intersections with $s_0$ always exists (incidentally, let us recall that this provides a positive answer to the conjecture raised in \cite{BNW}). Moreover, a solution with $u(R_1) > s_0$ and having exactly $k$ intersections with $s_0$ also exists, if we further assume that $f$ satisfies suitable sub-criticality growth conditions at infinity (that is, in the case of the ball, $q < 2^*$ for the prototype nonlinearity \eqref{eq:prototype}). 
We note in passing that in \cite{ABF2,ABF,ColasuonnoNoris,CNproc} a similar analysis is performed for quasilinear problems governed by the $p$-Laplacian operator, cf. also \cite{PS1} for ground state solutions.  Furthermore, the case of exponential nonlinearities is treated in \cite{BCN,PS2}.

The aim of the present paper is to show that the above multiplicity pattern can still be provided for the strongly nonlinear problem \eqref{eq:main}: even more, due to the singular character of the Minkowski-curvature operator, the existence of solutions with $u(R_1) > s_0$ does not require any sub-criticality condition. Indeed, we work with a minimal set of assumptions for $f$; precisely, we just assume
\begin{itemize}
\item[$(f_\textrm{reg})$] $f \in \mathcal{C}^1([0,+\infty))$;
\item[$(f_\textrm{eq})$] $f(0) = f(s_0) = 0$, $f(s) < 0$ for $0 < s < s_0$ and $f(s) > 0$ for $s > s_0$.
\end{itemize}

In order to state our main result, we need to introduce the radial eigenvalue problem for the Laplacian with homogeneous Neumann boundary conditions in $\Omega$, that is
\begin{equation}\label{eq:eigenv_radial}
-(r^{N-1} u')'=\lambda r^{N-1} u \quad\text{in } (R_1,R_2) \qquad u'(R_1)=u'(R_2)=0.
\end{equation}
It is well known that all the eigenvalues are simple and arranged in an increasing sequence
\[
0=\lambda_1^{\mathrm{rad}}<\lambda_2^{\mathrm{rad}}<\ldots<\lambda_k^{\mathrm{rad}}<\ldots\to+\infty
\]
(for additional information concerning this problem see Appendix \ref{sec:5} ahead).
The main result of the paper is the following.

\begin{theorem}\label{thm:main}
Let $\Omega$ be either the annulus $\mathcal A(R_1,R_2)$ or the ball $\mathcal B(R_2)$ and let $f$ satisfy $(f_\textrm{reg})$, $(f_\textrm{eq})$ and
\begin{itemize}
\item[$(f_{s_0})$] for some integer $k \geq 1$ it holds $f'(s_0) > \lambda_{k+1}^{\textnormal{rad}}$.
\end{itemize}
Then there exist at least $2k$ distinct non-constant radial solutions $u_1,\ldots,u_{2k}$ to \eqref{eq:main}. 
Moreover, we have
\begin{itemize}
\item[(i)] $u_j(R_1)<s_0$ for every $j=1,\ldots,k$;
\item[(ii)] $u_j(R_1)>s_0$ for every $j=k+1,\ldots,2k$;
\item[(ii)] $u_j(r)-s_0$ and $u_{j+k}(r)-s_0$ have exactly $j$ zeros for $r\in(R_1,R_2)$, for every $j=1,\ldots,k$.
\end{itemize}
\end{theorem}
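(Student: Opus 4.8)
The plan is to recast the radial problem as a shooting problem for the singular ODE
\[
-\bigl(r^{N-1}\phi(u')\bigr)' = r^{N-1} f(u), \qquad \phi(s):=\frac{s}{\sqrt{1-s^2}},
\]
together with $u'(R_1)=u'(R_2)=0$, parametrizing the Cauchy problem by the shooting datum $d:=u(R_1)$ and imposing $u'(R_1)=0$ (with the usual regularity prescription at the origin in the ball case). First I would establish, for every $d>0$, global existence, uniqueness and continuous dependence on $d$ of the solution $u(\cdot;d)$ on $[R_1,R_2]$. The decisive structural feature is that $\phi$ is a bijection of $(-1,1)$ onto $\mathbb{R}$, so $|u'|<1$ is automatic; hence $u$ is uniformly Lipschitz and cannot blow up, and no growth restriction on $f$ is needed. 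Note that $d=s_0$ gives the constant solution $u\equiv s_0$.

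Second, I would pass to the phase plane $(u,v)$ with $v:=\phi(u')$, where the equation becomes the planar system $u'=v/\sqrt{1+v^2}$, $v'=-\frac{N-1}{r}v-f(u)$, whose only equilibrium is $(s_0,0)$ by $(f_{\mathrm{eq}})$. Writing the orbit in polar coordinates centred at $(s_0,0)$, I would check that it winds \emph{clockwise}: on $\{v=0\}$ one has $v'=-f(u)\ne0$ for $u\ne s_0$, and on $\{u=s_0\}$ one has $u'=v/\sqrt{1+v^2}\ne0$ for $v\ne0$, so both the $u$-axis and the vertical line $\{u=s_0\}$ are crossed transversally. This makes the rotation number well defined and identifies the zeros of $u(\cdot;d)-s_0$ in $(R_1,R_2)$ with the crossings of $\{u=s_0\}$; a solution of the Neumann problem is exactly an orbit that starts and ends on the $u$-axis, i.e.\ one for which the lifted angle $\theta(R_2;d)$ is a multiple of $\pi$.

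Third come the two asymptotic regimes that bracket the rotation. As $d\to s_0$ the suitably rescaled solutions converge to those of the linearized equation $-(r^{N-1}w')'=f'(s_0)\,r^{N-1}w$, $w'(R_1)=0$; by Sturm comparison and $(f_{s_0})$, the inequality $f'(s_0)>\lambda_{k+1}^{\mathrm{rad}}$ forces $w$ to oscillate at least as fast as the $(k+1)$-th Neumann eigenfunction, which has $k$ interior zeros, so for $d$ near $s_0$ (on either side) the orbit crosses $\{u=s_0\}$ at least $k$ times. At the other end the crossing count collapses to zero: as $d\to0^+$ the solutions converge to $u\equiv0$ (again a zero of $f$), which never meets $s_0$; and for $d>s_0+(R_2-R_1)$ the bound $|u'|<1$ gives $u(r)\ge d-(R_2-R_1)>s_0$ on all of $[R_1,R_2]$, so again there are no crossings. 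It is precisely this last elementary estimate, available only because $|u'|<1$, that removes any sub-criticality hypothesis on the side $d>s_0$.

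Finally I would run the shooting/continuity argument separately on $(0,s_0)$ and on $(s_0,+\infty)$: on each interval the number of crossings of $\{u=s_0\}$ varies continuously (through the lifted angle $\theta(R_2;d)$) from a value $\ge k$ near $d=s_0$ down to $0$ at the respective endpoint, so for each $j=1,\dots,k$ an intermediate-value argument yields a $d$ at which $\theta(R_2;d)\in\pi\mathbb{Z}$ and the orbit has exactly $j$ crossings, i.e.\ a non-constant Neumann solution with exactly $j$ interior zeros of $u-s_0$. This produces $u_1,\dots,u_k$ with $u_j(R_1)<s_0$ and $u_{k+1},\dots,u_{2k}$ with $u_j(R_1)>s_0$; the two families are distinct because of the sign of $u(R_1)-s_0$, and within each family the distinct zero counts separate the solutions, giving the required $2k$ solutions. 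I expect the main obstacle to be making the rotation number genuinely continuous up to the degenerate shooting value $d=s_0$ (where the orbit shrinks to the equilibrium) and justifying the linearization there uniformly, together with the singularity of the $\frac{N-1}{r}$ term at the origin in the ball case, which demands separate care both for solvability of the Cauchy problem and for the limiting behaviour of the angle.
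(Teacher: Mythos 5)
Your proposal is correct and follows essentially the same strategy as the paper: shooting for the equivalent first-order system, polar coordinates around $(s_0,0)$ with a monotone angle, comparison with the linearized Pr\"ufer equation for $d$ near $s_0$, the elementary bound $|u'|<1$ to rule out rotations at $d=0$ and for $d>s_0+R_2-R_1$, and an intermediate-value argument in $d$. The only notable (cosmetic) difference is that the paper sets $v=r^{N-1}\varphi(u')$ rather than $v=\varphi(u')$, which places the singular factor inside the bounded function $\varphi^{-1}$ and thereby tames the origin in the ball case that you rightly flag as the delicate point.
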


For the proof of Theorem \ref{thm:main}, similarly as in the papers \cite{ABF2,ABF,MontefuscoPucci} we use a shooting approach for the equivalent ODE problem
\begin{equation}\label{eq:main_radial}
\begin{cases}
\left(r^{N-1} \frac{u'}{\sqrt{1-(u')^2}}\right)'+r^{N-1} f(u)=0 \qquad r\in (R_1,R_2)\\
u > 0 \\
u'(R_1)=u'(R_2)=0.
\end{cases}
\end{equation}
Precisely, we first write the equation as the equivalent first order system
\begin{equation}\label{sys:intro}
u' = \frac{v}{r^{N-1}\sqrt{1+ (v/r^{N-1})^2}}, \qquad v' = - r^{N-1} f(u),
\end{equation}
and we then look for values $d \in (0,+\infty) \setminus \{s_0\}$ such that the solution
$(u_d,v_d)$ with initial condition $(u_d(R_1),v_d(R_1)) = (d,0)$ satisfies $v_d(R_2) = 0$. Due to the assumption $(f_\textrm{eq})$, solutions $(u_d,v_d)$ wind around the equilibrium point $(s_0,0)$ in the clockwise sense and, actually, the number of half-turns around such a point is nothing but the number of intersections of $u_d$ with the constant solution $u \equiv s_0$ (see Figure \ref{fig:polar_coord}).
By estimating, via assumption $(f_{s_0})$, the number of half-turns around $(s_0,0)$ when $d \to s_0$ and the number of half-turns when $d \to 0^+$ and $d \to +\infty$, the existence of solutions making a precise number of half-turns follows from a continuity argument, cf. Fig. \ref{fig:num_giri}.
\begin{figure}[h]
\includegraphics[scale=0.8]{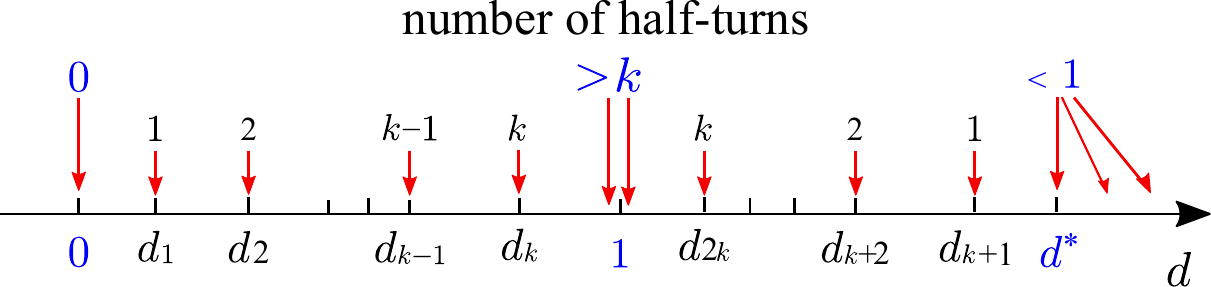}
\caption{Number of half-turns performed by the solutions of the Cauchy problem $(u(R_1),v(R_1))=(d,0)$ associated with \eqref{sys:intro} varying with the initial condition $u(R_1)=d$. The existence of a bound from above $d^*$ for the initial data $d$ in correspondence to which the solutions of the Cauchy problem perform at least one half-turn in the phase plane is a consequence of the fact that $|u'|\le 1$, see \eqref{sys:intro}.}\label{fig:num_giri}
\end{figure}
It is interesting to observe that the singular character of the Minkowski operator reflects into the fact that the right-hand side in the equation for $u'$ (see \eqref{sys:intro}) is globally bounded, this being ultimately the reason why no-subcriticality conditions are needed in this setting.
On the contrary, as already mentioned, when the equation is governed by the Laplacian (cf. \cite{bonheure2016multiple}) or the $p$-Laplacian operator (cf. \cite{ABF2,CNproc}), the existence of solutions having $u(R_1)>s_0$ could be proved only if the nonlinearity $f$ is Sobolev-subcritical. Indeed, this additional assumption allows to prove a priori estimates on the solutions, which are crucial in the proof of a bound $d^*$ from above of initial data $d$ and eventually in the proof of the existence of solutions with $u(R_1)>s_0$. The necessity of such a growth condition is also confirmed by numerical evidence in \cite[Fig. 16]{bonheure2016multiple}. 

In the case of the prototype nonlinearity \eqref{eq:prototype}, Theorem~\ref{thm:main} reduces to the following result. 

\begin{corollary}\label{cor:prot}
Let $\Omega$ be either the annulus $\mathcal A(R_1,R_2)$ or the ball $\mathcal B(R_2)$ and let $f(s)=s^{q-1}-s^{r-1}$ with $r\geq2$ and $q-r>\lambda_{k+1}^{\mathrm{rad}}$ for some integer $k\ge 1$.
Then there exist at least $2k$ distinct non-constant radial solutions $u_1,\ldots,u_{2k}$ to \eqref{eq:main}. 
Moreover, we have
\begin{itemize}
\item[(i)] $u_j(R_1)<1$ for every $j=1,\ldots,k$;
\item[(ii)] $u_j(R_1)>1$ for every $j=k+1,\ldots,2k$;
\item[(ii)] $u_j(r)-1$ and $u_{j+k}(r)-1$ have exactly $j$ zeros for $r\in(R_1,R_2)$, for every $j=1,\ldots,k$.
\end{itemize}
\end{corollary}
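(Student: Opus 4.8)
The plan is to show that the prototype nonlinearity $f(s)=s^{q-1}-s^{r-1}$ satisfies all the hypotheses of Theorem~\ref{thm:main} with $s_0=1$, and then to invoke that theorem directly. Since the statements (i)--(iii) of the corollary coincide verbatim with those of Theorem~\ref{thm:main} once $s_0=1$ is identified, nothing further is required beyond checking $(f_\textrm{reg})$, $(f_\textrm{eq})$ and $(f_{s_0})$.

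First I would verify $(f_\textrm{reg})$. Since $\lambda_{k+1}^{\mathrm{rad}}>0$ for every $k\geq 1$, the standing hypothesis $q-r>\lambda_{k+1}^{\mathrm{rad}}$ forces $q>r\geq 2$, hence $q-1>r-1\geq 1$ and in particular $q>2$. Therefore the maps $s\mapsto s^{q-1}$ and $s\mapsto s^{r-1}$ are $C^1$ up to the origin, with derivatives $(q-1)s^{q-2}$ and $(r-1)s^{r-2}$ extending continuously to $s=0$ (here the inequalities $q-2>0$ and $r-2\geq 0$ are exactly what is needed), so that $f\in\mathcal C^1([0,+\infty))$.

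Next, for $(f_\textrm{eq})$, a direct computation gives $f(0)=0$ and $f(1)=1-1=0$, so $s_0=1$ is the relevant equilibrium. For $0<s<1$ one has $s^{q-1}<s^{r-1}$, because $q-1>r-1$ and the map $t\mapsto s^{t}$ is decreasing when $s\in(0,1)$; hence $f(s)<0$ on $(0,1)$. Symmetrically, for $s>1$ one has $s^{q-1}>s^{r-1}$ and thus $f(s)>0$. This establishes $(f_\textrm{eq})$.

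Finally, for $(f_{s_0})$ I would differentiate, obtaining $f'(s)=(q-1)s^{q-2}-(r-1)s^{r-2}$ and hence $f'(1)=(q-1)-(r-1)=q-r$. The hypothesis $q-r>\lambda_{k+1}^{\mathrm{rad}}$ then reads precisely as $f'(s_0)>\lambda_{k+1}^{\mathrm{rad}}$, which is $(f_{s_0})$. With all three assumptions of Theorem~\ref{thm:main} in force, the conclusion follows at once. I do not expect any genuine obstacle in this argument, as it is essentially a verification of hypotheses; the only point deserving a moment's care is the $C^1$-regularity at the origin, which is exactly why the constraint $r\geq 2$ (and, together with $q>r$, the resulting strict inequality $q>2$) is imposed in the statement.
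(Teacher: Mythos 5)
Your proposal is correct and follows exactly the route the paper intends: the corollary is stated as an immediate specialization of Theorem \ref{thm:main} to $f(s)=s^{q-1}-s^{r-1}$ with $s_0=1$, and your verification of $(f_\textrm{reg})$, $(f_\textrm{eq})$ and $(f_{s_0})$ (via $f'(1)=q-r$) is precisely the required check. The paper gives no further argument, so nothing is missing.
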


To illustrate better our results stated in Theorem \ref{thm:main} and Corollary \ref{cor:prot}, we present below some numerical simulations performed with the software AUTO-07P \cite{AUTO}. In the simulations we consider the following special case of \eqref{eq:main_radial}:
\begin{equation}\label{eq:rad_prot}
\begin{cases}
\left(r^{N-1} \frac{u'}{\sqrt{1-(u')^2}}\right)'+r^{N-1} (-u^2+u^{q-1})=0 \qquad r\in (0,1)\\
u > 0 \\
u'(0)=u'(1)=0,
\end{cases}
\end{equation}
with $q>3$ and $N=1,2$. In Figures \ref{fig:dim1} (a) and \ref{fig:dim2} (a) below, we represent $u(0)$ as a function of $q$. The black horizontal line corresponds to the constant solution $u\equiv s_0=1$. We can see that, as soon as the exponent $q$ overcomes a value of the form $3+\lambda_{k+1}^{\mathrm{rad}}$ ($k\ge1$), a new branch of solutions appears and each of these branches bifurcates from the constant solution. Furthermore, in Figures \ref{fig:dim1} (b) and \ref{fig:dim2} (b) we plot a selection of nonconstant solutions, in order to show their oscillatory behavior. It can be seen that solutions belonging to different branches present different oscillatory behaviors, so that the appearance of a new branch of solutions indicates the existence of a new solution with a different oscillatory behavior. This is coherent with the multiplicity result stated in Corollary \ref{cor:prot}.
We can finally observe from the simulations (a), that while in dimension $N=1$ the values $3+\lambda_{k+1}^{\mathrm{rad}}$ seem to be optimal in $q$ for the existence of a new solution having a different oscillatory behavior, in dimension $N=2$ the bifurcation points are of different type (called {\it transcritical}) and so, for instance, a nonconstant (decreasing) solution can be expected to exist also in correspondence to values of $q$ that are very close to, but {\it smaller} than $3+\lambda_2^{\mathrm{rad}}$. An analogous situation has been detected and analytically proved via bifurcation theory, for the semilinear problem in \cite{bonheure2016multiple}. It would be interesting to perform a bifurcation analysis also in the quasilinear case and to prove analytically the numerical evidence that the branches of nonconstant solutions actually bifurcate from the branch of constant solutions $u\equiv s_0$.
\begin{figure}[h!]
\includegraphics[width=\textwidth]{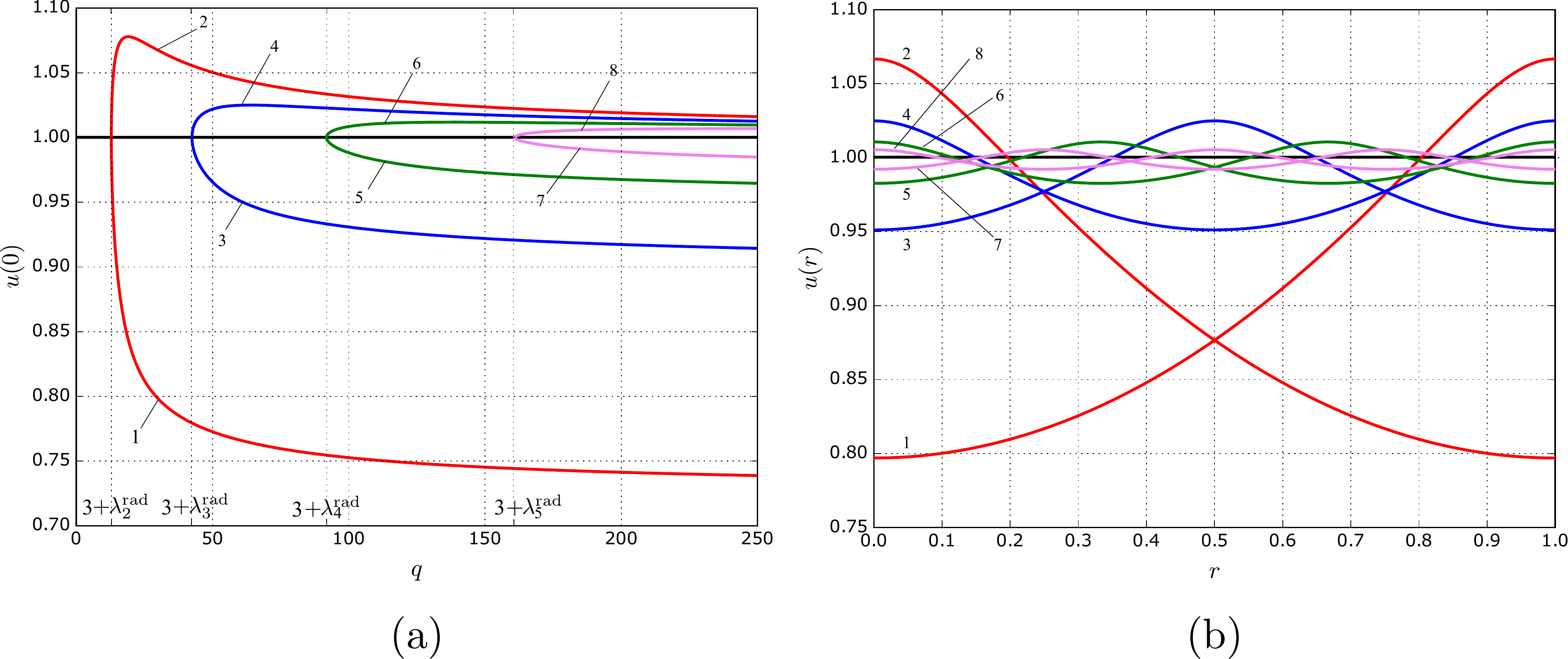}
\caption{(a) Partial bifurcation diagram in dimension $N=1$, with $R_1=0$, $R_2=1$, and $s_0=1$. (b) Graphs of eight solutions belonging to the four branches represented in (a). The colour of each solution is the same as the branch it belongs to. For each branch we have selected two solutions, one with $u(0)>1$ and the other with $u(0)<1$. The solutions displayed correspond to different values of $q$.}\label{fig:dim1}
\end{figure}
\begin{figure}[h!]
\includegraphics[width=\textwidth]{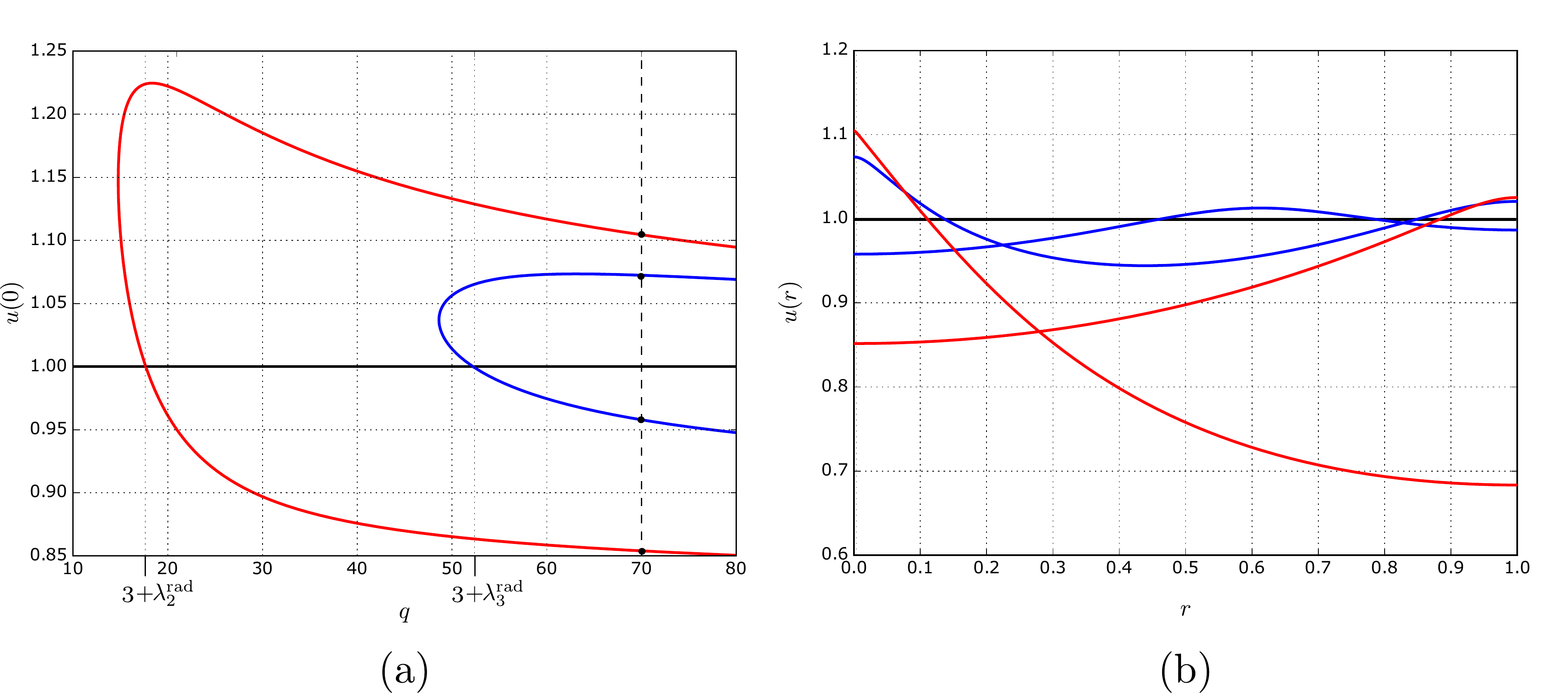}
\caption{(a) Partial bifurcation diagram in a unit disk (i.e., $R_1=0$, $R_2=1$, and $N=2$), with $s_0=1$. (b) Solutions corresponding to $q=70$.}\label{fig:dim2}
\end{figure}


The plan of the paper is the following. In Section \ref{sec:2} we prove some preliminary results for the (possibly singular) Cauchy problem $(u(R_1),v(R_1)) = (d,0)$ associated with \eqref{sys:intro}. In Section \ref{sec:3} we estimate, using a trick based on a system of scaled polar coordinates, the number of half-turns around the equilibrium point $(s_0,0)$ of solutions $(u_d,v_d)$ with $d \to s_0$. Finally, in Section \ref{sec:3} we give the proof of Theorem~\ref{thm:main}. 
The paper ends with Appendix \ref{sec:5}, where we recall some known results for the radial Neumann eigenvalue problem for $-\Delta u = \lambda u$.

\section{The Cauchy problem}\label{sec:2}
In all the paper we will assume that $\Omega$ is either the annulus $\mathcal A(R_1,R_2)$ or the ball $\mathcal B(R_2)$ and that $f$ satisfies $(f_\textrm{reg})$ and $(f_\textrm{eq})$.

Let us introduce the trivial extension of $f$
\begin{equation}\label{eq:f_hat}
\hat f(s):=\begin{cases}
	f(s) \quad&\text{if } s \geq 0\\
	0 &\text{if } s < 0,
\end{cases}
\end{equation}
which is continuous thanks to $(f_\textrm{eq})$.
We consider the radial problem
\begin{equation}\label{eq:radial}
\begin{cases}
(r^{N-1} \varphi(u'))'+r^{N-1}\hat f(u)=0 \qquad r\in (R_1,R_2)\\
u'(R_1)=u'(R_2)=0.
\end{cases}
\end{equation}
Here, with the usual abuse of notation, $u(r)=u(|x|)$, the prime symbol $'$ denotes the derivative with respect to $r$, and
\begin{equation}\label{eq:phi_def}
\varphi(s):=\frac{s}{\sqrt{1-s^2}}.
\end{equation}
For future use, note that $\varphi$ is invertible with inverse
\[
\varphi^{-1}(t)=\frac{t}{\sqrt{1+t^2}}
\]
and that
\begin{equation}\label{eq:inverse-phi}
|\varphi^{-1}(t)|<1 \qquad\text{for all } t\in\R.
\end{equation}
We shall see (cf. Lemma \ref{lem:hat_f} at the end of this section) that $u$ is a nonconstant radial solution of \eqref{eq:main} if and only if $u$ is a nonconstant solution of \eqref{eq:radial}. 

Before doing that, we prove uniqueness, global continuability and regularity for the associated Cauchy problem; hence we consider, for every $d\geq 0$,
\begin{equation}\label{eq:shooting}
\begin{cases}
u'=\varphi^{-1}\left(\frac{v}{r^{N-1}}\right) \qquad & r\in (R_1,R_2)\\
v'=-r^{N-1} \hat f(u) \qquad & r\in (R_1,R_2) \\
u(R_1)=d \\
v(R_1)=0.
\end{cases}
\end{equation}
We note that, despite the presence of the term $v/r^{N-1}$ (which is singular if $N \geq 2$ and $R_1 = 0$), using \eqref{eq:inverse-phi} it follows that the right hand side of the above system, that is $F:(R_1,R_2)\times\R^2\to \R^2$ defined as
\[
F(r,(u,v)) :=
\left(\varphi^{-1}\left(\frac{v}{r^{N-1}}\right), -r^{N-1} \hat f(u)\right),
\]
is an $L^\infty$-Carath\'eodory function, namely
\begin{itemize}
\item[(i)] $F(\cdot,(u,v))$ is measurable for every $(u,v)\in \R^2$;
\item[(ii)] $F(r,\cdot)$ is continuous for almost every $r\in (R_1,R_2)$;
\item[(iii)] for every $K>0$ there exists $C_K>0$ such that $|F(r,(u,v))|\leq C_K$ for almost every $r \in (R_1,R_2)$ and for every $(u,v)\in\R^2$ with $|(u,v)|\leq K$.
\end{itemize}
Hence, by the Peano existence theorem for ODEs with $L^\infty$-Carath\'eodory right hand side (see for example \cite[Section 1.5]{Hale}), the existence of a local $W^{1,\infty}$ solution of \eqref{eq:shooting} is guaranteed. Below, we prove uniqueness, higher regularity and global continuability for such a solution, henceforth denoted by $(u_d,v_d)$, as well as a continuous dependence result.

\begin{lemma}\label{lem:uniqueness_Cauchy}
For every $d\geq 0$, the local $W^{1,\infty}$ solution $(u_d,v_d)$ of \eqref{eq:shooting} is unique and can be defined on the whole $[R_1,R_2]$; moreover, $u_d$ is of class $C^2([R_1,R_2])$, with $u_d'(R_1) = 0$.

In addition, if $(d_n)\subset [0,+\infty)$ is such that $d_n\to d\in [0,+\infty)$ as $n\to+\infty$, then 
\begin{equation}\label{eq:dip_cont}
(u_{d_n}(r),v_{d_n}(r))\to (u_d(r),v_d(r))\quad \mbox{uniformly for }r\in [R_1,R_2],
\end{equation}
\begin{equation}\label{eq:dip_cont_u'}
u'_{d_n}(r)\to u'_d(r) \quad \mbox{uniformly for }r\in [R_1,R_2].
\end{equation}
\end{lemma}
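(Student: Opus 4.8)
The plan is to prove uniqueness, global continuability, $C^2$ regularity, and continuous dependence by exploiting the boundedness built into the system via \eqref{eq:inverse-phi}. Since the only delicate point is the singularity of $v/r^{N-1}$ at $r=R_1=0$ (when $N\geq 2$), I would first treat the regular case $R_1>0$ and then handle the singular case by a limiting/integral-equation argument.

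\medskip

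First I would establish \emph{uniqueness}. Away from $r=R_1$ (or for all $r$ when $R_1>0$), the right-hand side $F(r,(u,v))$ is locally Lipschitz in $(u,v)$: the map $t\mapsto \varphi^{-1}(t/r^{N-1})$ is $C^1$ (indeed $\varphi^{-1}$ is smooth by its explicit formula), and $\hat f$ is locally Lipschitz by $(f_{\mathrm{reg}})$ together with $\hat f\equiv 0$ on $(-\infty,0)$. Hence standard Picard--Lindel\"of uniqueness applies on any compact subinterval of $(R_1,R_2]$. To reach down to $r=R_1$ in the singular case, I would rewrite \eqref{eq:shooting} as the integral system
\begin{equation*}
u(r)=d+\int_{R_1}^r \varphi^{-1}\!\left(\frac{v(\rho)}{\rho^{N-1}}\right)d\rho,\qquad
v(r)=-\int_{R_1}^r \rho^{N-1}\hat f(u(\rho))\,d\rho,
\end{equation*}
where the first integrand is bounded by $1$ thanks to \eqref{eq:inverse-phi}, so both integrals are well defined even at $R_1=0$. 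Uniqueness near $R_1$ then follows from a Gronwall estimate: subtracting two solutions, the $v$-difference is controlled by the Lipschitz constant of $\hat f$ times $\sup|u_1-u_2|$, while the $u$-difference is controlled using that $\varphi^{-1}$ is $1$-Lipschitz (since $|(\varphi^{-1})'|\le 1$), giving $|u_1(r)-u_2(r)|\le \int_{R_1}^r \rho^{-(N-1)}|v_1-v_2|\,d\rho$; the factor $\rho^{-(N-1)}$ is integrable against the $v$-difference because $v_i(\rho)=O(\rho^N)$ near $0$, which is exactly the content of $u_d'(R_1)=0$ and keeps $v/r^{N-1}$ bounded.

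\medskip

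Next I would prove \emph{global continuability and regularity}. Because $|u_d'|=|\varphi^{-1}(v/r^{N-1})|<1$ by \eqref{eq:inverse-phi}, the solution $u_d$ stays within the a priori bound $|u_d(r)-d|\le R_2-R_1$, and then $|v_d'|\le r^{N-1}\max_{[0,\,d+R_2]}|\hat f|$ gives an a priori bound on $v_d$ as well; thus the solution cannot blow up and extends to all of $[R_1,R_2]$. For the boundary derivative, from the integral form of $v_d$ one gets $v_d(r)/r^{N-1}=-\frac{1}{r^{N-1}}\int_{R_1}^r\rho^{N-1}\hat f(u_d(\rho))\,d\rho\to 0$ as $r\to R_1^+$ (explicitly $\to -\frac{R_1}{N}\hat f(d)$ when $R_1>0$ versus $\to 0$ when $R_1=0$, in either case forcing $u_d'(R_1)=\varphi^{-1}(0)=0$ at the ball's center). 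Differentiability and the $C^2$ property of $u_d$ then follow by bootstrapping: $u_d'=\varphi^{-1}(v_d/r^{N-1})$ is continuous with $v_d$ continuous and $r^{N-1}$ bounded away from $0$ on any $[R_1+\delta,R_2]$, and the behavior at the center is handled by the vanishing just established, after which $\hat f(u_d)\in C^1$ (using $u_d>0$ near the relevant range so that $\hat f=f$) makes $v_d\in C^1$ and hence $u_d'\in C^1$.

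\medskip

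Finally I would prove \emph{continuous dependence} \eqref{eq:dip_cont}--\eqref{eq:dip_cont_u'}. I expect this to be the main obstacle, precisely because the singular factor $r^{-(N-1)}$ prevents a direct appeal to classical continuous-dependence theorems when $R_1=0$. The strategy is to work with the integral formulation above and apply a Gronwall-type argument uniform in $n$: the a priori bounds from the previous step give equiboundedness and, via $|u_{d_n}'|<1$, equicontinuity of $(u_{d_n})$, so Ascoli--Arzel\`a yields a uniformly convergent subsequence whose limit solves the limiting integral system; by the uniqueness already proved, the full sequence converges to $(u_d,v_d)$, giving \eqref{eq:dip_cont}. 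For \eqref{eq:dip_cont_u'}, I would use the uniform convergence of $v_{d_n}$ together with the continuity of $\varphi^{-1}$, handling the center $r=R_1=0$ separately by the uniform smallness of $v_{d_n}(r)/r^{N-1}$ near $0$ — the key quantitative point being that the estimate $|v_{d_n}(r)|\le C r^N$ holds with a constant $C$ independent of $n$, so that $v_{d_n}(r)/r^{N-1}\to 0$ uniformly in $n$ as $r\to 0^+$, which upgrades the convergence of $u_{d_n}'$ all the way to the singular endpoint.
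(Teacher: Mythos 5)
Your overall strategy coincides with the paper's: global continuability from the a priori bound $|u_d'|<1$ given by \eqref{eq:inverse-phi}, uniqueness at the singular endpoint via a Gronwall estimate that exploits $v(r)=O(r^N)$ to neutralize the factor $r^{-(N-1)}$, continuous dependence \eqref{eq:dip_cont} by Ascoli--Arzel\`a plus uniqueness, and \eqref{eq:dip_cont_u'} by controlling $v_{d_n}(r)/r^{N-1}$ uniformly near $r=0$ (the paper does this by the single estimate $|v_{d_n}(r)-v_d(r)|/r^{N-1}\le\int_0^r|\hat f(u_{d_n})-\hat f(u_d)|\,ds$, which is essentially your $\varepsilon$-splitting argument packaged more directly). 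These parts are sound.

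There is, however, a genuine gap in the $C^2$ step at the center. Your chain ``$v_d\in C^1$ hence $u_d'=\varphi^{-1}(v_d/r^{N-1})\in C^1$'' fails at $r=0$: knowing $v_d\in C^1([0,R_2])$ with $v_d(0)=v_d'(0)=0$ does \emph{not} imply that $v_d(r)/r^{N-1}$ is differentiable (or even bounded) at $r=0$ --- for $N=3$, $v(r)=r^{3/2}$ is $C^1$ with $v(0)=v'(0)=0$ yet $v(r)/r^{2}$ blows up. The vanishing you established, $v_d(r)/r^{N-1}\to 0$, only yields $u_d\in C^1$ with $u_d'(0)=0$; to get $C^2$ one must show that $\varphi(u_d'(r))/r=v_d(r)/r^N$ has a finite limit as $r\to 0^+$, which requires using the explicit integral representation $v_d(r)=-\int_0^r s^{N-1}\hat f(u_d(s))\,ds$ to compute $\lim_{r\to 0^+}\varphi(u_d'(r))/r=-\hat f(d)/N$ (this is exactly what the paper does, following \cite[Remark 3.3]{CoelhoCorsatoRivetti}). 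Separately, your parenthetical claim that $v_d(r)/r^{N-1}\to -\tfrac{R_1}{N}\hat f(d)$ when $R_1>0$ is incorrect (the limit is $v_d(R_1)/R_1^{N-1}=0$ since $v_d(R_1)=0$) and is inconsistent with the conclusion $u_d'(R_1)=\varphi^{-1}(0)=0$ that you draw from it; this is harmless, since the case $R_1>0$ is regular, but it should be fixed.
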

\begin{proof} We first observe that any solution $(u_d,v_d)$ can be defined in $[R_1,R_2]$. Indeed, since $|u'_d|<1$ by \eqref{eq:inverse-phi} and \eqref{eq:shooting}, we obtain 
$$
|u_d(r)|\le d+\int_{R_1}^r |u_d'(s)|ds \le d+R_2-R_1
$$
and consequently
$$
|v_d(r)|\le  \int_{R_1}^r s^{N-1}|\hat f(u_d(s))|ds \le \frac{(R_2-R_1)^N}{N} \max_{|s|\le d+R_2-R_1}|\hat f(s)|
$$
for every $\textcolor{blue}{R_1\le}r\le R_2$ in the maximal interval of definition of the solution $(u_d,v_d)$. 
Hence, $(u_d,v_d)$ cannot blow-up in a finite interval, proving that it can be globally extended in $[R_1,R_2]$.

In the rest of the proof we suppose $R_1=0$, since the result is standard in case $R_1>0$.
In order to prove uniqueness, let $(u_d,v_d)$ and $(\bar u_d,\bar v_d)$ be two distinct solutions of \eqref{eq:shooting} and define $(U,V):=(\bar u_d-u_d,\bar v_d-v_d)$. Then $(U,V)$ is a solution of the following problem 
\begin{equation}\label{pb:U'V'}
\begin{cases}
&U'=\varphi^{-1}\left(\frac{\bar v_d}{r^{N-1}}\right)-\varphi^{-1}\left(\frac{v_d}{r^{N-1}}\right)\\
&V'=-r^{N-1}(\hat f(\bar u_d)-\hat f(u_d))\\
&U(0)=V(0)=0.
\end{cases}
\end{equation}
Using the local-Lipschitz continuity of $\varphi^{-1}$ and $\hat f$ we find
\begin{equation*}
|U'(r)|\le K \frac{|V(r)|}{r^{N-1}} \quad\mbox{and}\quad |V'(r)|\le K r^{N-1}|U(r)|
\end{equation*}
for a.e. $r\in [0,R_2]$, where $K > 0$ is a suitable constant.
Then,  
$$
\begin{aligned}
|U(r)|&\le \int_{0}^r |U'(s)| ds\le K\int_0^r  \frac{|V(s)|}{s^{N-1}}ds\le
K\int_{0}^r \frac{1}{s^{N-1}}\left(\int_0^s |V'(t)|dt\right) ds \\
&\le 
K^2 \int_{0}^r \frac{1}{s^{N-1}}\left(\int_0^s t^{N-1}|U(t)|dt\right) ds
\le K^2 \int_{0}^r \left(\int_0^s|U(t)|dt\right) ds\\
& \le K^2 R_2 \int_{0}^r |U(t)|dt
\end{aligned}
$$
for every $r\in [0,R_2]$.
Hence, by Gronwall's Lemma, 
$$
U(r)=0\quad	\mbox{for every }r\in [0,R_2]
$$
and consequently, using \eqref{pb:U'V'}, also $V(r)=0$ for every $[0,R_2]$. 

We now prove that $u_d$ is of class $C^1([0,R_2])$, with $u_d'(0) = 0$. By the second equation in \eqref{eq:shooting}, we obtain
$$
\left\vert \frac{v_d(r)}{r^{N-1}}\right \vert = \left\vert \int_{0}^r \left( \frac{s}{r}\right)^{N-1} \hat f(u_d(s))\,ds\right \vert
\leq \int_{0}^r \vert \hat f(u_d(s)) \vert\,ds.
$$
Therefore, 
$$
\frac{v_d(r)}{r^{N-1}} \to 0, \quad \mbox{ as } r \to 0^+,
$$
and consequently, by the continuity of $\varphi^{-1}$, we conclude that $u_d'$ can be continuously extended up to $r = 0$ with $u_d'(0) = 0$.

In order to show that $u_d$ is of class $C^2([0,R_2])$, following \cite[Remark 3.3]{CoelhoCorsatoRivetti} let us prove that
\begin{equation}\label{eq:u_d_C2}
\lim_{r\to0^+} \frac{\varphi(u'_d(r))}{r}=-\frac{\hat f(d)}{N}.
\end{equation}
To this aim, fix $\varepsilon>0$. By the continuity of $\varphi$ and of $u_d$, there exists $\delta>0$ such that $|\hat f(d)-\hat f(u_d(s \textcolor{blue}{r}))|<\varepsilon$ for every $s\textcolor{blue}{r}\in[0,\delta)$. Taking $r\in (0,\delta)$, we have
\begin{multline*}
\left| \frac{\hat f(d)}{N} +\frac{\varphi(u_d'(r))}{r}\,\right|=
\left| \frac{\hat f(d)}{N} -\frac{1}{r^N}\int_{0}^r s^{N-1} \hat f(u_d(s))\,ds\,\right| \\
=\left| \frac{1}{r^N} \int_0^r s^{N-1} \left( \hat f(d)-\hat f(u_d(s)) \right) \,ds \,\right| 
\leq \frac{\varepsilon}{N},
\end{multline*}
so that \eqref{eq:u_d_C2} holds. As a consequence, $\varphi(u'_d)$ is of class $C^1([0,R_2])$, which implies, being $\varphi^{-1}$ regular, that $u_d$ is of class $C^2([0,R_2])$.

Concerning the continuous dependence, the proof of \eqref{eq:dip_cont} is a standard argument based on the Ascoli-Arzel\`a Theorem. 
We now prove \eqref{eq:dip_cont_u'}. Let $d_n\to d$, then, by the second equation in \eqref{eq:shooting}, we get for every $r\in [R_1,R_2]$
\begin{multline}
\left\vert \frac{v_{d_n}(r)-v_d(r)}{r^{N-1}}\right \vert = \left\vert \int_{0}^r \left( \frac{s}{r}\right)^{N-1} [\hat f(u_{d_n}(s))-\hat f(u_d(s))]\,ds\right \vert\\
\leq \int_{0}^r \vert \hat f(u_{d_n}(s))-\hat f(u_d(s)) \vert\,ds \to 0 \quad\mbox{as } n\to\infty,
\end{multline}
where in the last step we used \eqref{eq:dip_cont} and the continuity of $\hat f$.
Finally, by the first equation in \eqref{eq:shooting} and the continuity of $\varphi^{-1}$, \eqref{eq:dip_cont_u'} follows. 
\end{proof}

\begin{remark}
Let us notice that, in the case $R_1 = 0$, the above lemma implies that $v_d(r)/r^{N-1}$ can be continuously extended \textcolor{blue}{to 0} up to $r = 0$ and that
(cf. \eqref{eq:dip_cont_u'})
\begin{equation}\label{eq:dip_cont_vr}
\frac{v_{d_n}(r)}{r^{N-1}}\to \frac{v_d(r)}{r^{N-1}} \quad \mbox{uniformly for }r\in [0,R_2].
\end{equation}
\end{remark}
We conclude the section with a maximum-type principle.

\begin{lemma}\label{lem:hat_f} The function
$u$ is a radial solution of \eqref{eq:main} if and only if $u$ solves \eqref{eq:radial} and $u\not\equiv -C$ with $C\geq0$.
\end{lemma}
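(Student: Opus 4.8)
The plan is to prove the two implications separately. The forward one is a routine rewriting, whereas the reverse one reduces to establishing strict positivity of $u$, which is where the sign structure of the extension $\hat f$ and the uniqueness results for \eqref{eq:shooting} come into play.

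For ``\eqref{eq:main} $\Rightarrow$ \eqref{eq:radial}'', I would rewrite the PDE in radial coordinates: for a radial $C^2(\overline\Omega)$ function with $|u'|<1$ one has $\mathrm{div}\big(\nabla u/\sqrt{1-|\nabla u|^2}\big)=r^{1-N}\big(r^{N-1}\varphi(u')\big)'$, while the Neumann condition gives $u'(R_2)=0$, together with $u'(R_1)=0$ (forced by smoothness at the origin when $R_1=0$). Since $u>0$ we have $\hat f(u)=f(u)$ by \eqref{eq:f_hat}, so $u$ solves \eqref{eq:radial}; and, being positive, $u$ is certainly not identically equal to a non-positive constant.

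For the converse, assume $u$ solves \eqref{eq:radial} and is not identically a non-positive constant, and let me show $u>0$; once this is known, $\hat f(u)=f(u)$ and, since $|u'|<1$ by \eqref{eq:inverse-phi}, the radial identity read backwards shows that $u$ solves \eqref{eq:main}. Set $m:=\min_{[R_1,R_2]}u$, attained at some $\bar r$, and suppose by contradiction that $m\le0$. The key observation is that $\hat f(m)=0$: indeed $\hat f$ vanishes on $(-\infty,0)$ by \eqref{eq:f_hat} and $\hat f(0)=f(0)=0$ by $(f_\textrm{eq})$. Hence the constant function $\equiv m$ solves the equation in \eqref{eq:radial}; moreover $u'(\bar r)=0$ (from interior minimality, or from the boundary condition when $\bar r\in\{R_1,R_2\}$), so $u$ and the constant $m$ solve the same Cauchy problem for system \eqref{eq:shooting} with datum $(u,v)=(m,0)$ at $\bar r$. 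Uniqueness then forces $u\equiv m\le0$, contradicting the hypothesis; therefore $m>0$ and $u>0$ on $[R_1,R_2]$.

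The delicate point --- and the main obstacle --- is to justify the uniqueness used in the last step, because of the singular factor $v/r^{N-1}$ in \eqref{eq:shooting}. When the minimizer $\bar r$ is a regular point, i.e. $\bar r>0$, the right-hand side of \eqref{eq:shooting} is locally Lipschitz in $(u,v)$ near $(\bar r,(m,0))$ (using the smoothness of $\varphi^{-1}$ and the local Lipschitz continuity of $\hat f$), so standard ODE uniqueness applies and a connectedness argument propagates the identity $u\equiv m$ across the whole interval (the endpoint $r=0$, if relevant, being reached by continuity). The only genuinely singular situation is the ball with the minimum attained at the center $\bar r=R_1=0$: there classical uniqueness is not available and I would instead invoke Lemma \ref{lem:uniqueness_Cauchy}, which was proved precisely for the Cauchy problem with datum prescribed at $R_1$, to conclude directly that $u$ coincides on all of $[0,R_2]$ with the constant $m$. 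Handling this singular case is the crux; everything else is bookkeeping.
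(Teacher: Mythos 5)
Your proof is correct, and the second half is organized genuinely differently from the paper's. The paper splits the converse into two steps: it first establishes $u\geq 0$ by invoking the test-function argument of \cite[Lemma 2.1]{ABF} (multiply by the negative part and use that $\hat f(u)u^-\equiv 0$), and only afterwards excludes zeros of $u$ by comparing with the constant solution $\equiv 0$ through uniqueness of the Cauchy problem at a touching point. You collapse both steps into a single one: since $\hat f$ vanishes on all of $(-\infty,0]$, a nonpositive minimum value $m$ yields a constant solution through the touching point $\bar r$, and Cauchy uniqueness forces $u\equiv m$, which is precisely the excluded case $u\equiv -C$. This is more self-contained (no appeal to the external lemma), and you treat the only singular configuration, $\bar r=R_1=0$ in the ball, by the same device as the paper, namely Lemma \ref{lem:uniqueness_Cauchy}. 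The one point to tidy up is that Lemma \ref{lem:uniqueness_Cauchy} is stated only for initial data $d\geq 0$, whereas your argument may need it at a negative level $d=m<0$; its proof (Gronwall together with the local Lipschitz continuity of $\varphi^{-1}$ and of $\hat f$ on all of $\R$) applies verbatim to negative data, but this should be said explicitly. Everything else --- the radial rewriting with $|u'|<1$, the vanishing of $u'$ at the minimizer, and the propagation of the identity $u\equiv m$ from a neighbourhood of an interior $\bar r>0$ to the whole interval --- is handled correctly.
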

\begin{proof}
Proceeding exactly as in \cite[Lemma 2.1]{ABF}, one can prove that if $u$ solves \eqref{eq:radial} and $u\not\equiv -C$ with $C\geq0$ then $u\geq0$.
It only remains to show that $u(r)>0$ for $r\in [R_1,R_2]$. To this aim, suppose by contradiction that there exists $\bar r \in [R_1,R_2]$ such that $u(\bar r)=0$, then also $u'(\bar r)=0$. If $\bar r>0$, the standard Cauchy-Lipschitz theory implies that $u\equiv0$, which is a contradiction. Otherwise $\bar r=0$, and we proved in Lemma \ref{lem:uniqueness_Cauchy} that also in this case $u\equiv0$.
\end{proof}

\section{Scaled polar coordinates}\label{sec:3}
Thanks to the uniqueness result proved in Lemma \ref{lem:uniqueness_Cauchy}, we can study system \eqref{eq:shooting} by passing to scaled polar coordinates around the point $(s_0,0)$. For $\alpha>0$ and $r\in [R_1,R_2]$, let
\begin{equation}\label{eq:polar}
\begin{cases}
u(r)-s_0=\rho(r)\cos\theta(r) \\
v(r)=-\alpha \rho(r)\sin\theta(r).
\end{cases}
\end{equation}
For $d\geq 0$ and $d\neq s_0$, if $(u_d,v_d)$ satisfies \eqref{eq:shooting}, the corresponding $(\theta_d,\rho_d)$ is such that, for $r\in (R_1,R_2)$,
\begin{equation}\label{eq:theta'}
\begin{aligned}
\theta_d'&=\frac{1}{\alpha\rho_d^2} \left[\varphi^{-1}\left(\frac{v_d}{r^{N-1}}\right)v_d + r^{N-1}\hat f(u_d)(u_d-s_0) \right] \\
&=\frac{\alpha \sin^2\theta_d}{r^{N-1}[1+(v_d/r^{N-1})^2]^{1/2}}
+r^{N-1} \hat f(u_d) \frac{u_d-s_0}{\alpha\rho_d^2}
\end{aligned}
\end{equation}
with initial conditions
\[
\theta_d(R_1)=\begin{cases}
\pi \quad \text{if } 0<d<s_0\\
0 \quad \text{if } d>s_0.
\end{cases}
\quad\text{and}\quad
\rho_d(R_1)=|d-s_0|.
\]

We remark that, by \eqref{eq:theta'}, $\theta_d(r)$ is strictly increasing for every $r\in [R_1,R_2]$, so that the solution $(u_d,v_d)$ is turning clockwise around $(s_0,0)$ in the phase plane $(u,v)$.

\begin{figure}[h]
\includegraphics{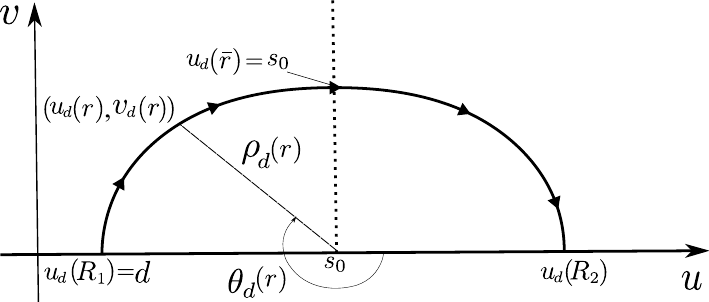}
\caption{A solution $(u_d,v_d)$, with $0<d<s_0$, in the phase plane $(u,v)$. The solution is also given in polar coordinates $(\theta_d,\rho_d)$ with $\alpha=1$. It can be noted from the picture that $u_d(\bar r)=s_0$ if and only if $\cos\theta_d(\bar r)=0$ and that $v_d(r)=0$ for some $r\in [R_1,R_2]$ if and only if $\sin\theta_d(r)=0$.}\label{fig:polar_coord}
\end{figure}

We would like to estimate the quantity $\theta_d(R_2) - \theta_d(R_1)$ for $d$ in a neighborhood of $s_0$. We remark that, despite the fact that the angle $\theta_d$ depends on the constant $\alpha$, the quantity
\[
\left\lfloor\frac{\theta_d(R_2) - \theta_d(R_1)}{\pi}\right\rfloor,
\]
that is the number of half turns of the solution around $(s_0,0)$, does not depend on $\alpha$. Here $\lfloor\cdot\rfloor$ denotes the floor function. Indeed, this quantity is the number of zeros of $v(r)$ for $r\in (R_1,R_2]$, which clearly does not depend on $\alpha$, cf. also Fig. \ref{fig:polar_coord}.

\begin{lemma}\label{lem:d_close1}
Suppose that, for some integer $k\geq1$,
\[
f'(s_0)>\lambda_{k+1}^{\textnormal{rad}}.
\]
There exists $\bar\delta>0$ such that $\theta_d(R_2) - \theta_d(R_1)>k\pi$ for $d\in [s_0-\bar\delta,s_0+\bar\delta]$ and $d \neq s_0$.
\end{lemma}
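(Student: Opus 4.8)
The plan is to compare the nonlinear rotation with that of the problem linearized at the equilibrium, which is exactly the eigenvalue equation \eqref{eq:eigenv_radial} with $\lambda=f'(s_0)$. The natural device is to rescale: setting $\mu:=d-s_0$ and
\[
y_d:=\frac{u_d-s_0}{\mu},\qquad z_d:=\frac{v_d}{\mu},
\]
the pair $(y_d,z_d)$ starts at $(1,0)$ and, by \eqref{eq:shooting}, solves the system obtained by replacing $\varphi^{-1}(t)$ with $\Phi_\mu(t):=\mu^{-1}\varphi^{-1}(\mu t)=t/\sqrt{1+\mu^2t^2}$ and $\hat f(s)$ with $G_\mu(s):=\mu^{-1}\hat f(s_0+\mu s)$. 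Since $\Phi_\mu(t)\to t$ and, using $(f_\textrm{reg})$ together with $\hat f(s_0)=0$, $G_\mu(s)\to f'(s_0)s$ as $\mu\to0$ (locally uniformly), the limiting system is the linear one $y'=z/r^{N-1}$, $z'=-r^{N-1}f'(s_0)y$, that is $(r^{N-1}y')'+r^{N-1}f'(s_0)y=0$.

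The first step is to prove that $(y_d,z_d)\to(y,z)$ uniformly on $[R_1,R_2]$ as $d\to s_0$, where $(y,z)$ solves the linear problem with $(y(R_1),z(R_1))=(1,0)$. A dilation of the phase plane preserves angles (up to a global shift by $\pi$ when $\mu<0$, irrelevant for the rotation), so this yields $\theta_d(R_2)-\theta_d(R_1)\to\Theta(R_2)-\Theta(R_1)$, where $\Theta$ is the angular function of $(y,z)$ with $\Theta(R_1)=0$. This convergence is where the singular character of the problem ($R_1=0$, $N\ge2$) enters, and I would handle it exactly as in Lemma \ref{lem:uniqueness_Cauchy}: the bound $|z_d(r)/r^{N-1}|\le\int_{R_1}^r|G_\mu(y_d)|\,ds$ controls $y_d'$ uniformly in $\mu$, Ascoli-Arzel\`a gives compactness, and the uniqueness of the linear Cauchy problem identifies every limit point with $(y,z)$.

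It then remains to show that the linear rotation exceeds $k\pi$. Writing $\Theta_\lambda$ for the angle of $(r^{N-1}y')'+\lambda r^{N-1}y=0$ with $y(R_1)=1$, $y'(R_1)=0$, a computation analogous to the one leading to \eqref{eq:theta'} gives
\[
\Theta_\lambda'=\frac{\alpha\sin^2\Theta_\lambda}{r^{N-1}}+\frac{\lambda\,r^{N-1}\cos^2\Theta_\lambda}{\alpha},
\]
whose right-hand side is strictly increasing in $\lambda$; hence $\Theta_\lambda(r)$ is strictly increasing in $\lambda$ by scalar ODE comparison. On the other hand, for $\lambda=\lambda_{k+1}^{\mathrm{rad}}$ the solution $y$ is the $(k+1)$-th radial Neumann eigenfunction, which has exactly $k$ zeros in $(R_1,R_2)$ and satisfies $y'(R_2)=0$; since the zeros of $y$ are the points where $\Theta_\lambda\in\frac{\pi}{2}+\pi\Z$ while $y'(R_2)=0$ forces $\Theta_\lambda(R_2)\in\pi\Z$, the monotonicity of $\Theta_\lambda$ in $r$ yields $\Theta_{\lambda_{k+1}^{\mathrm{rad}}}(R_2)=k\pi$. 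Therefore, by the strict monotonicity in $\lambda$ and $(f_{s_0})$,
\[
\Theta(R_2)-\Theta(R_1)=\Theta_{f'(s_0)}(R_2)>\Theta_{\lambda_{k+1}^{\mathrm{rad}}}(R_2)=k\pi .
\]

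Combining the two steps, there exists $\bar\delta>0$ such that $\theta_d(R_2)-\theta_d(R_1)>k\pi$ for $d\in[s_0-\bar\delta,s_0+\bar\delta]\setminus\{s_0\}$. The step I expect to be the main obstacle is the uniform convergence $(y_d,z_d)\to(y,z)$ up to the endpoint $r=R_1=0$, where the field is singular: the rescaling is designed precisely so that the a priori estimates of Lemma \ref{lem:uniqueness_Cauchy} persist uniformly in $\mu$ and furnish compactness. A secondary delicate point is the \emph{strict} monotonicity $\Theta_{\lambda_1}<\Theta_{\lambda_2}$ for $\lambda_1<\lambda_2$, needed to upgrade $\ge k\pi$ to $>k\pi$; it holds because two distinct angular solutions cannot agree on any interval on which $\cos^2\Theta$ is not identically zero.
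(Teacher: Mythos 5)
Your argument is correct, but it follows a genuinely different route from the paper's. You linearize: after the blow-up $(y_d,z_d)=\mu^{-1}(u_d-s_0,v_d)$ with $\mu=d-s_0$, you prove uniform convergence to the solution of $(r^{N-1}y')'+r^{N-1}f'(s_0)y=0$ with $(y,y')(R_1)=(1,0)$, deduce convergence of the rotation numbers, and then invoke the strict monotonicity of $\vartheta_\lambda(R_2)$ in $\lambda$ (Theorem \ref{th:RW}) together with $\vartheta_{\lambda_{k+1}^{\mathrm{rad}}}(R_2)=k\pi$. The paper instead never passes to a limit problem: it fixes an intermediate $\bar\lambda\in(\lambda_{k+1}^{\mathrm{rad}},f'(s_0))$, uses $(f_{\mathrm{reg}})$ and continuous dependence to get the pointwise bounds $\hat f(u_d)(u_d-s_0)\ge\bar\lambda\sqrt{1+\varepsilon^2}\,(u_d-s_0)^2$ and $|v_d|/r^{N-1}\le\varepsilon$, and then chooses the scaling parameter $\alpha=\sqrt{1+\varepsilon^2}$ in \eqref{eq:polar} precisely so that the factor $[1+(v_d/r^{N-1})^2]^{1/2}$ in \eqref{eq:theta'} is absorbed, yielding the differential inequality $\theta_d'\ge\frac{\sin^2\theta_d}{r^{N-1}}+\bar\lambda r^{N-1}\cos^2\theta_d$; a scalar comparison with \eqref{eq:theta_mu} then gives the conclusion directly for all $d$ in a fixed punctured neighborhood, with no compactness extraction. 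Your approach buys an asymptotically sharp statement ($\theta_d(R_2)-\theta_d(R_1)\to\vartheta_{f'(s_0)}(R_2)$ as $d\to s_0$), but at the cost of redoing the singular continuous-dependence analysis of Lemma \ref{lem:uniqueness_Cauchy} uniformly in the rescaling parameter and of checking that the limit orbit avoids the origin so that angles converge (both of which you correctly flag and which do go through); the paper's comparison argument is shorter and only uses the already-established qualitative continuous dependence \eqref{eq:dip_cont}, \eqref{eq:dip_cont_vr}. Both arguments rest in the end on the same appendix facts, Theorems \ref{th:RW} and \ref{th:eigen}, for the strict inequality $>k\pi$.
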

\begin{proof}
Suppose that $f'(s_0)>\lambda_{k+1}^{\mathrm{rad}}$.
Let $\bar\lambda$ be such that
\begin{equation}\label{eq:bar_lambda_def}
\lambda_{k+1}^{\mathrm{rad}}<\bar \lambda<f'(s_0),
\end{equation}
and consequently choose $\varepsilon>0$ such that
\begin{equation*}
\bar\lambda \sqrt{1+\varepsilon^2} <f'(s_0)-\varepsilon.
\end{equation*}
Then, using assumptions $(f_\textrm{reg})$ and $(f_\textrm{eq})$, there exists $\delta>0$ such that, for every $s$ satisfying $|s-s_0|\leq \delta$, it holds
\begin{equation}\label{eq:ineq1}
\hat f(s)(s-s_0)\geq (f'(s_0)-\varepsilon)(s-s_0)^2 \geq
\bar\lambda \sqrt{1+\varepsilon^2} (s-s_0)^2.
\end{equation}
Thanks to \eqref{eq:dip_cont} and \eqref{eq:dip_cont_vr}, there exists $\bar\delta>0$ such that, for every $d$ satisfying $0<|d-s_0|\leq\bar\delta$, it holds
\begin{equation}\label{eq:ineq2}
|u_d(r)-s_0|\leq \delta \qquad\text{and}\qquad 
\frac{|v_d(r)|}{r^{N-1}}\leq\varepsilon,
\end{equation}
for every $r\in [R_1,R_2]$, being $u_{s_0}\equiv s_0$ in $[R_1,R_2]$. Now we choose in \eqref{eq:polar} and consequently in \eqref{eq:theta'}
\[
\alpha=\sqrt{1+\varepsilon^2}.
\]
By replacing \eqref{eq:ineq1} and \eqref{eq:ineq2} into \eqref{eq:theta'}, and using \eqref{eq:polar}, we obtain that, for every $d$ satisfying $0<|d-s_0|\leq \bar\delta$ and $r\in [R_1,R_2]$,
\begin{equation}
\theta_d'(r)\geq \frac{\sin^2\theta_d(r)}{r^{N-1}}+\bar\lambda r^{N-1} \cos^2\theta_d(r).
\end{equation}
Recalling equation \eqref{eq:theta_mu} with $\mu=\bar\lambda$, using the Comparison Theorem for ODEs and \eqref{eq:initial_linear}, we obtain, for all $d$ satisfying $0<|d-s_0|\leq\bar\delta$,
$$
\theta_d(r) - \theta_d(R_1)\geq \vartheta_{\bar\lambda}(r)\quad\mbox{for all }r\in[R_1,R_2].
$$
In particular, by relation \eqref{eq:bar_lambda_def} and Theorems \ref{th:RW} and \ref{th:eigen}, we have
\[
\theta_d(R_2) - \theta_d(R_1)>  \vartheta_{\lambda_{k+1}^{\mathrm{rad}}}(R_2) = k\pi.  \qedhere
\]
\end{proof}

\section{Proof of the main result}\label{sec:4}

\begin{proof}[Proof of Theorem \ref{thm:main}]
Suppose first $d\in [0,s_0)$ and let $(u_d,v_d)$ be the solution of \eqref{eq:shooting}. We consider the associated angular variable $\theta_d$ given by \eqref{eq:polar} with $\alpha=1$. Notice that $\theta_d(R_1)=\pi$. By assumption $(f_{s_0})$ and Lemma \ref{lem:d_close1}, we have
\begin{equation}\label{eq:at_s_0}
\theta_d(R_2) >( k+1)\pi \quad \text{for }d\in [s_0-\bar\delta,s_0).
\end{equation}
Furthermore 
\begin{equation}\label{eq:at_0}
\theta_0(r)\equiv\pi \quad \text{for every } r\in [R_1,R_2].
\end{equation}
By \eqref{eq:dip_cont}, the map $d\mapsto \theta_d(R_2)$ is continuous on $[0,s_0)$. Thus \eqref{eq:at_s_0} and \eqref{eq:at_0} imply that, for all $j=1,\ldots,k$, there exists $d_j \in (0,s_0)$ such that 
\begin{equation}\label{eq:d_j}
\theta_{d_j}(R_2)=(j+1)\pi.
\end{equation}
By \eqref{eq:polar}, this corresponds to $u'_{d_j}(R_2)=0$, so that $u_j:=u_{d_j}$ for $j=1,\ldots,k$ are the solutions mentioned in point (i). 
In order to study the oscillatory behavior of $u_j$ for $j=1,\ldots,k$, we notice that, by \eqref{eq:theta'} and \eqref{eq:d_j}, $\theta_{d_j}(\cdot)$ is strictly increasing from $\pi$ to $(j+1)\pi$. Hence there exist exactly $j$ radii $r_1,\ldots,r_{j}\in (R_1,R_2)$ such that $\theta_{d_j}(r_1)=\frac{3}{2}\pi_p$, $\theta_{d_j}(r_2)=\frac{5}{2}\pi_p,\ldots,\theta_{d_j}(r_{j})=\left(j+\frac{1}{2}\right)\pi_p$. Again by \eqref{eq:polar}, we have $u_j(r)=s_0$ if and only if $r\in \{r_1,\ldots,r_j\}$.

If $d>s_0$, then $\theta_d(R_1)=0$. Again by assumption $(f_{s_0})$ and Lemma \ref{lem:d_close1}, we have
\[
\theta_d(R_2) >k\pi \quad \text{for }d\in (s_0,s_0+\bar\delta].
\] 
On the other hand, we claim that, letting $d^\ast :=s_0+R_2-R_1$, it holds
\[
\theta_{d^\ast}(R_2)<\pi.
\] 
Indeed by \eqref{eq:inverse-phi} we have, for every $r\in [R_1,R_2]$,
\begin{multline*}
\rho_{d^\ast}(r)\cos\theta_{d^\ast}(r)=u_{d^\ast}(r)-s_0
\geq -\int_{R_1}^r |u'_{d^\ast}(s)| \,ds +d^\ast-s_0 \\
> -(R_2-R_1)+d^\ast-s_0=0,
\end{multline*}
implying that $\theta_{d^\ast}(r)<\pi/2$ for every $r\in [R_1,R_2]$.

As in the previous case, we infer the existence of $k$ values $d_{j+k}$ for $j=1,\ldots,k$ such that $\theta_{d_{j+k}}(R_2)=j\pi$. We thus obtain the solutions $u_{j+k}:=u_{d_{j+k}}$ for every $j=1,\ldots,k$ whose existence is stated in (ii). Similarly as before, there exist $r_1,\ldots,r_{j}\in (R_1,R_2)$ such that $\theta_{d_{j+k}}(r_1)=\frac{1}{2}\pi$, $\theta_{d_{j+k}}(r_2)=\frac{3}{2}\pi,\ldots,\theta_{d_{j+k}}(r_{j})=\left(j-\frac{1}{2}\right)\pi$, proving the oscillatory behavior stated at (iii).
\end{proof}

\appendix

\section{Radial eigenvalue problem for the Neumann Laplacian}\label{sec:5}

In this appendix we recall some known results concerning the radial eigenvalue problem for the Laplacian with homogeneous Neumann boundary conditions \eqref{eq:eigenv_radial}.
Even though this equation contains the possibly singular weight $r^{1-N}$, it is well known that the eigenfunctions satisfy the classical Sturm theory. We refer for example to \cite{RW99}, where indeed a more general problem is treated.

\begin{theorem}{\cite[Theorem 1]{RW99}}\label{th:eigen}
The eigenvalue problem \eqref{eq:eigenv_radial} has a countable number of simple eigenvalues $0=\lambda_1^{\mathrm{rad}}<\lambda_2^{\mathrm{rad}}<\lambda_3^{\mathrm{rad}}<\dots$, $\lim_{k\to+\infty}\lambda_k^{\mathrm{rad}}=+\infty$, and no other eigenvalues. The eigenfunction that corresponds to the $k$-th eigenvalue $\lambda_k^{\mathrm{rad}}$ has $k-1$ simple zeros in $(R_1,R_2)$.  
\end{theorem}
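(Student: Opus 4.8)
The plan is to read \eqref{eq:eigenv_radial} as a (possibly singular) Sturm--Liouville problem in self-adjoint form $-(p\,u')'=\lambda\,w\,u$ with $p(r)=w(r)=r^{N-1}$, and to extract the three assertions from the spectral theory of the associated operator together with classical oscillation theory. First I would set the problem in the weighted space $L^2((R_1,R_2);r^{N-1}\,dr)$ and consider the symmetric, nonnegative bilinear form $a(u,v)=\int_{R_1}^{R_2} r^{N-1}u'v'\,dr$ on the natural weighted $H^1$ space, in which the homogeneous Neumann conditions $u'(R_1)=u'(R_2)=0$ appear as natural boundary conditions. A Rellich-type compact embedding of the form domain into $L^2((R_1,R_2);r^{N-1}\,dr)$ shows that the associated self-adjoint operator has compact resolvent, hence a purely discrete spectrum $\lambda_1^{\mathrm{rad}}\leq\lambda_2^{\mathrm{rad}}\leq\cdots\to+\infty$ and no other spectral values; nonnegativity of $a$ forces $\lambda_k^{\mathrm{rad}}\geq0$, and the constant function gives $\lambda_1^{\mathrm{rad}}=0$. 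Simplicity is immediate from the ODE viewpoint: once the condition at $R_2$ is fixed (together with boundedness, or $u'(R_1)=0$, at the left end), a second-order linear equation determines the eigenfunction up to a multiplicative constant, so each eigenspace is one-dimensional and the inequalities $\lambda_1^{\mathrm{rad}}<\lambda_2^{\mathrm{rad}}<\cdots$ are strict.

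For the nodal count I would use a Pr\"ufer-type transformation, writing $u=\rho\sin\psi$ and $r^{N-1}u'=\rho\cos\psi$ and deriving the first-order equation satisfied by the phase $\psi$. Sturm's comparison and oscillation theorems then give that the total phase increment of $\psi$ across $(R_1,R_2)$ --- equivalently, the number of interior zeros of the eigenfunction --- is nondecreasing in $\lambda$ and increases by exactly one each time $\lambda$ crosses an eigenvalue. Matching the phase values dictated by the Neumann conditions at the two endpoints then identifies the $k$-th eigenfunction as the unique one (up to sign and scaling) with precisely $k-1$ simple interior zeros.

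The delicate point, and the step I expect to be the main obstacle, is the singular endpoint $R_1=0$ when $N\geq2$: there both the coefficient $p(r)=r^{N-1}$ and the weight $r^{N-1}$ degenerate at $r=0$, so the left end is a singular boundary point and the classical regular theory does not apply verbatim. Here I would check that the regularity requirement at the origin --- boundedness of $u$, equivalently $u'(0)=0$ --- selects a unique self-adjoint realisation, and that the compactness and nonnegativity used above survive the degenerate weight. The decisive observation is that the regular solution behaves near $0$ like a multiple of $r^{-(N-2)/2}J_{(N-2)/2}(\sqrt{\lambda}\,r)$, which is non-oscillatory on a neighbourhood of the origin; hence no zeros accumulate at $r=0$ and the count established above is unaffected. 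This extension of Sturm--Liouville theory to the present singular weight is precisely what is proved in the more general framework of \cite[Theorem 1]{RW99}, which I would invoke to conclude.
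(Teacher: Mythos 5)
The paper does not actually prove this theorem: it is imported verbatim from \cite[Theorem 1]{RW99} (where a more general Sturm--Liouville problem, possibly singular and for the $p$-Laplacian, is treated), and the appendix only records the Pr\"ufer-type phase equation \eqref{eq:theta'-omogenea-associata} together with the normalizations \eqref{eq:initial_linear} and \eqref{initialcondition-ep} that are used in Section 3. Your outline is therefore not in conflict with anything in the paper; it is a reasonable reconstruction of the standard proof, and it correctly isolates the one genuinely delicate point, the singular endpoint $R_1=0$ for $N\geq 2$, for which you too ultimately defer to \cite{RW99}. Two items of bookkeeping would be needed for the sketch to stand on its own. First, with your convention $u=\rho\sin\psi$, $r^{N-1}u'=\rho\cos\psi$, the Neumann conditions read $\psi(R_1),\psi(R_2)\in\tfrac{\pi}{2}+\pi\Z$ and interior zeros of $u$ correspond to $\psi\in\pi\Z$; the paper's convention ($u=\varrho\cos\vartheta$, $r^{N-1}u'=-\varrho\sin\vartheta$, $\vartheta(R_1)=0$) is the same up to a shift by $\pi/2$ and is the one for which \eqref{initialcondition-ep} holds. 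Second, the phrase ``increases by exactly one each time $\lambda$ crosses an eigenvalue'' compresses the actual mechanism: what one proves is that $r\mapsto\vartheta_\lambda(r)$ is strictly increasing (hence all zeros are simple and the nodal count equals the number of phase crossings of odd multiples of $\pi/2$), that $\lambda\mapsto\vartheta_\lambda(R_2)$ is continuous, strictly increasing (this is Theorem \ref{th:RW}) and diverges to $+\infty$, and that $\lambda$ is an eigenvalue exactly when $\vartheta_\lambda(R_2)\in\pi\Z$; the existence of infinitely many eigenvalues, their accumulation only at $+\infty$, and the count of $k-1$ interior zeros for $\lambda_k^{\mathrm{rad}}$ all follow from these three facts together, while simplicity follows, as you say, from uniqueness up to scaling of the solution selected by the condition at $R_1$. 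The strict monotonicity and divergence of the terminal phase are precisely the parts that require care at the degenerate weight $r^{N-1}$ near $r=0$, so the citation to \cite{RW99} is doing real work there as well, not only for the self-adjoint realisation.
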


Via the change of variables 
\[
\begin{cases}
u(r)=\varrho_\lambda(r)\cos\vartheta_\lambda(r) \\
r^{N-1}u'(r)=-\varrho_\lambda(r)\sin\vartheta_\lambda(r),
\end{cases}
\]
if $u$ is an eigenfunction associated to $\lambda$ as in \eqref{eq:eigenv_radial}, the corresponding $(\vartheta_\lambda,\varrho_\lambda)$ is such that
\begin{equation}\label{eq:theta'-omogenea-associata}
\vartheta_\lambda'=\frac{\sin^2\vartheta_\lambda}{r^{N-1}}+\lambda r^{N-1}\cos^2\vartheta_\lambda, \qquad r\in [R_1,R_2].
\end{equation}
By convention, we choose an eigenfunction $u$ satisfying $u(R_1)>0$, so that we can assume
\begin{equation}\label{eq:initial_linear}
\vartheta_\lambda(R_1)=0.
\end{equation}

Notice that, by \eqref{eq:theta'-omogenea-associata}, the function $r \mapsto \vartheta_\lambda(r)$ is strictly increasing. As a consequence, if $\lambda=\lambda^{\mathrm{rad}}_{k+1}$ for $k\ge0$, the fact that the eigenfunction which corresponds to the $(k+1)$-th eigenvalue has $k$ simple zeros in $(R_1,R_2)$ reads as 
\begin{equation}\label{initialcondition-ep}
\vartheta_{\lambda^{\mathrm{rad}}_{k+1}}(R_2) =k\pi.
\end{equation}

We will also need to consider the Cauchy problem associated to the equation \eqref{eq:theta'-omogenea-associata} when $\lambda$ is not an eigenvalue. Also in this case it is known that, despite the fact that the equation may be singular, there exists a unique solution satisfying the initial condition $\vartheta_\mu(R_1)=0$.
\begin{theorem}{\cite[Theorem 4]{RW99}}\label{th:RW}
For every $\mu>0$, let $\vartheta_\mu$ solve
\begin{equation}\label{eq:theta_mu}
\vartheta_\mu'=\frac{\sin^2\vartheta_\mu}{r^{N-1}}+\mu r^{N-1}\cos^2\vartheta_\mu, \qquad r\in [R_1,R_2],
\end{equation}
with initial condition $\vartheta_\mu(R_1)=0$.
The function $\vartheta_\mu(R_2)$ is strictly increasing in $\mu$.
\end{theorem}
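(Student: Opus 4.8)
The plan is to read the statement as a monotone-dependence-on-a-parameter result for the first-order Pr\"ufer-type equation \eqref{eq:theta_mu}, and to prove it in two stages: first weak monotonicity by the comparison theorem for scalar ODEs, then strict monotonicity via an integral representation of the difference of two solutions. The starting observation is that the right-hand side
\[
g(r,\vartheta,\mu):=\frac{\sin^2\vartheta}{r^{N-1}}+\mu r^{N-1}\cos^2\vartheta
\]
is nondecreasing in $\mu$, since $\partial_\mu g(r,\vartheta,\mu)=r^{N-1}\cos^2\vartheta\ge 0$. This monotone dependence of the vector field on $\mu$ is precisely what should drive the monotonicity of $\vartheta_\mu(R_2)$.

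For the weak inequality, fix $0<\mu_1<\mu_2$. Because $g(r,\vartheta,\mu_1)\le g(r,\vartheta,\mu_2)$ for all $(r,\vartheta)$, the function $\vartheta_{\mu_1}$ is a subsolution of the $\mu_2$-equation sharing the same initial datum $\vartheta_{\mu_1}(R_1)=\vartheta_{\mu_2}(R_1)=0$. The comparison theorem then gives $\vartheta_{\mu_1}(r)\le \vartheta_{\mu_2}(r)$ for all $r\in[R_1,R_2]$. In the singular case $R_1=0$, $N\ge 2$, I would first run this comparison on $[\varepsilon,R_2]$, where $g(\cdot,\cdot,\mu)$ is smooth, and then let $\varepsilon\to 0^+$, using the continuity of $\vartheta_\mu$ up to $r=0$.

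To upgrade to strict inequality, set $w:=\vartheta_{\mu_2}-\vartheta_{\mu_1}\ge 0$ with $w(R_1)=0$. Writing the difference of the two equations and applying the mean value theorem in the $\vartheta$-variable, $w$ solves the linear equation
\[
w'=a(r)\,w+b(r),\qquad a(r)=\partial_\vartheta g\big(r,\xi(r),\mu_2\big),\quad b(r)=(\mu_2-\mu_1)\,r^{N-1}\cos^2\vartheta_{\mu_1}(r),
\]
with $\xi(r)$ between $\vartheta_{\mu_1}(r)$ and $\vartheta_{\mu_2}(r)$. Variation of constants then yields
\[
w(R_2)=\int_{R_1}^{R_2}\exp\!\Big(\int_s^{R_2}a(\tau)\,d\tau\Big)\,b(s)\,ds.
\]
Here $b\ge 0$, and since $\vartheta_{\mu_1}$ is strictly increasing (the two terms of the right-hand side of \eqref{eq:theta_mu} cannot vanish simultaneously), $\cos^2\vartheta_{\mu_1}$ vanishes only at isolated points; hence $b>0$ on a set of positive measure. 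As the exponential weight is strictly positive, $w(R_2)>0$, that is $\vartheta_{\mu_1}(R_2)<\vartheta_{\mu_2}(R_2)$.

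The main obstacle is the singularity at $r=R_1=0$ for $N\ge 2$, which must be controlled for the integral representation to be legitimate; concretely, one needs $a\in L^1$ near $0$. Computing $a(r)=\sin\big(2\xi(r)\big)\big(r^{1-N}-\mu_2 r^{N-1}\big)$ and using that near the origin $\vartheta_\mu(r)\sim \tfrac{\mu}{N}r^{N}$ (the dominant balance in \eqref{eq:theta_mu} when $\vartheta$ is small), one gets $\xi(r)=O(r^{N})$, so that $\sin\big(2\xi(r)\big)\,r^{1-N}=O(r)\to 0$ and $a$ is in fact bounded near $r=0$. This makes the representation valid and closes the argument. For $R_1>0$ or $N=1$ the equation is a classical smooth ODE and all the above steps are immediate.
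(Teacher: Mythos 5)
The paper does not prove this statement at all: it is quoted verbatim from \cite[Theorem 4]{RW99}, so your proposal is not competing with an argument in the text but with an external reference. Your strategy is the standard Sturm--Pr\"ufer monotonicity argument (weak monotonicity in $\mu$ by comparison of scalar ODEs with ordered right-hand sides, then strictness via the variational equation $w'=a(r)w+b(r)$ and variation of constants, using $b\ge 0$ with $b>0$ off a finite set), and for $R_1>0$ or $N=1$ it is complete and correct; what it buys over the citation is a self-contained elementary proof of exactly the strict inequality $\vartheta_{\bar\lambda}(R_2)>\vartheta_{\lambda^{\mathrm{rad}}_{k+1}}(R_2)=k\pi$ that Lemma \ref{lem:d_close1} needs.

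The one genuine soft spot is the singular endpoint $R_1=0$, $N\ge 2$, and it affects both halves of your argument. For the weak comparison, ``run it on $[\varepsilon,R_2]$ and let $\varepsilon\to 0^+$'' does not work as stated: the comparison theorem on $[\varepsilon,R_2]$ needs the data ordered at $r=\varepsilon$, i.e.\ $\vartheta_{\mu_1}(\varepsilon)\le\vartheta_{\mu_2}(\varepsilon)$, which is precisely what you are trying to prove. For the strict inequality, everything hinges on $a\in L^1$ near $0$, which you reduce to $\vartheta_\mu(r)=O(r^N)$; but ``dominant balance'' is a heuristic, not a proof, and extracting this decay directly from the first-order equation is not straightforward (the natural Gronwall-type bootstraps run into $\int_0^r \vartheta(s)s^{1-N}\,ds$, whose convergence is again what one is trying to show). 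The clean fix is to go back to the linear problem: the regular solution $u$ of $(r^{N-1}u')'+\mu r^{N-1}u=0$ with $u(0)=1$, $u'(0)=0$ satisfies $r^{N-1}u'(r)=-\mu\int_0^r s^{N-1}u(s)\,ds=O(r^N)$ while $u(r)\to 1$, so its Pr\"ufer angle $\arctan\bigl(-r^{N-1}u'/u\bigr)$ solves \eqref{eq:theta_mu}, vanishes at $0$, and is $O(r^N)$; by the uniqueness asserted for the singular Cauchy problem this angle \emph{is} $\vartheta_\mu$ (this is the same device the paper uses for the nonlinear problem in Lemma \ref{lem:uniqueness_Cauchy}, where $v_d(r)/r^{N-1}\to 0$ is proved by exactly this integral estimate). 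Once $a\in L^1(0,R_2)$ is secured this way, the weak comparison follows from Gronwall applied to $w$ on $[0,r_0]$ starting from the last touching point, and your variation-of-constants identity is then legitimate on all of $[R_1,R_2]$, closing the argument.
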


\section*{Acknowledgments}
\noindent A. Boscaggin and B. Noris acknowledge the support of the project ERC Advanced Grant 2013 n. 339958: ``Complex Patterns for Strongly Interacting Dynamical Systems --COMPAT''.


\end{document}